\newtheorem{definition}{Definition}
\newtheorem{proposition}{Proposition}
\newtheorem{remark}{Remark}
\newtheorem{example}{Example}
\newtheorem{corollary}{Corollary}
\newtheorem{lemma}{Lemma}
\author{O. Dovgoshey, J. Riihentaus}
\begin{document}

\noindent\emph{This paper is dedicated to Professor Vladimir Gutlyanskii on the occasion of his 75-th anniversary.}

\medskip

\maketit

\address{Institute of Applied Mathematics and Mechanics of the NASU, Dobrovolskogo 1,   Sloviansk, 84100, Ukraine\\  Department of Mathematical Sciences, University of Oulu, P. O. Box~3000, FI $-$ 90014 Oulun yliopisto, Finland; Department of Physics and Mathematics, University of Eastern Finland, P. O. Box~111, FI $-$ 80101 Joensuu, Finland}

\email{aleksdov@mail.ru, juhani.riihentaus@gmail.com}

\abstract{We recall the definition of quasinearly subharmonic functions, point  out that this function class includes, among others, subharmonic functions,
quasisubharmonic functions, nearly subharmonic functions and essentially almost subharmonic functions. It is shown that the sum of two quasinearly subharmonic functions may not be quasinearly subharmonic. Moreover, we characterize the harmonicity via quasinearly subharmonicity.}

\notes{0}{
\subclass{Primary 31B05, 31C05; Se\-condary 31C45} 
\keywords{Subharmonic,  quasinearly subharmonic,  nearly subharmonic}
\thank{The publication is based on the research provided by the grant support of the State Fund For Fundamental Research (project N $20570$). The first author was also partially supported by Project 15-1bb$\setminus19$ ``Metric Spaces, Harmonic Analysis of Functions and Operators and Singular and Nonclassic Problems for Differential Equations'' (Donetsk National University, Vinnitsia, Ukraine)}}

\section{Subharmonic functions and nearly subharmonic functions.}

Denote by $\mathbb R^N$ the $N$-dimensional Euclidean space. If $x \in \mathbb R^N$, then the open ball centered at $x$ with radius $r>0$ will be denoted by $B^N(x,r)$ and we will write $\overline{B^N(x,r)}$ for the closure of this ball.

Let $D$ be a domain in $\mathbb R^N$, $N \geq 2$. An upper semicontinuous function $u:\, D\rightarrow [-\infty ,+\infty )$ is \emph{subharmonic} if the inequality
\[
u(x)\leq \frac{1}{\nu _N\, r^N}\int\limits_{B^N(x,r)}u(y)\, dm_N(y)
\]
holds for all $\overline{B^N(x,r)}\subset D$, where $\nu _N$ is the volume of the unit ball in $\mathbb R^N$.

The function $u\equiv -\infty $  is considered  subharmonic. A function $u$ defined on an open set $\Omega \subseteq \mathbb R^N$ is subharmonic if the restriction of $u$ to arbitrary connected component of $\Omega$ is subharmonic.

\begin{definition}\label{def1.1}
A function $u:\, D\rightarrow [-\infty ,+\infty )$ is \emph{nearly subharmonic}, if $u$ is Lebesgue measurable, $u^+\in {\mathcal{L}}^1_{\mathrm{loc}}(D)$
and
\begin{equation}\label{eq1.1}
u(x)\leq \frac{1}{\nu _N\, r^N}\int\limits_{B^N(x,r)}u(y)\, dm_N(y)
\end{equation}
holds for all $\overline{B^N(x,r)}\subset D$.
\end{definition}
Observe that our definition is slightly nonstandard because in the standard definition of nearly subharmonic functions one uses the stronger assumption $u\in {\mathcal{L}}^1_{\textrm{loc}}(D)$, see e.g. \cite{Her71}, p.~14.

The following lemma is an analog of Proposition~2.2 (vii) from \cite{Rii07$_4$}, p.~55, and Proposition~1.5.2 (vii) from \cite{Rii09$_2$}, p.~e2615.
\begin{lemma}\label{lem1.2}
Let $D$ be a domain in ${\mathbb{R}}^N$, $N\geq 2$, and let $u: D\rightarrow [-\infty ,+\infty)$ be nearly subharmonic in the sense of Definition~\ref{def1.1}. Then either $u \in \mathcal L_{\mathrm{loc}}^1(D)$ or the equality $u(x)=-\infty$ holds for every $x \in D$.
\end{lemma}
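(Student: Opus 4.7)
The plan is to introduce the open set
\[
A := \{x \in D : \overline{B^N(x,r)} \subset D \text{ and } u \in \mathcal{L}^1(B^N(x,r)) \text{ for some } r > 0\}
\]
and to prove that $A$ is either empty (forcing $u \equiv -\infty$ on $D$) or all of $D$ (giving $u \in \mathcal{L}^1_{\mathrm{loc}}(D)$). Since $A$ is open by definition and $D$ is connected, it suffices to show that $A$ is closed in $D$ whenever $A$ is nonempty.

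The first step, and the only place where \eqref{eq1.1} enters, is the observation that \emph{$u(x) > -\infty$ together with $\overline{B^N(x,r)} \subset D$ already forces $u \in \mathcal{L}^1(B^N(x,r))$}. Indeed, \eqref{eq1.1} bounds the integral of $u$ over the ball from below by $\nu_N r^N u(x) > -\infty$, and combining this with the hypothesis $u^+ \in \mathcal{L}^1_{\mathrm{loc}}(D)$ makes $u^-$ integrable as well. In particular every point of $\{u > -\infty\}$ lies in $A$, so if $A$ is empty then $u$ is identically $-\infty$ on $D$.

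Assume now that $A$ is nonempty and pick $x_\infty \in D \cap \overline A$. Setting $\delta := \mathrm{dist}(x_\infty, \partial D) > 0$, I would choose $x_0 \in A$ at distance less than $\delta/4$ from $x_\infty$ and a radius $\rho < \delta/4$ such that $u \in \mathcal{L}^1(B^N(x_0, \rho))$. Integrability supplies a point $y \in B^N(x_0, \rho)$ where $u(y) > -\infty$; a triangle-inequality check gives $|y - x_\infty| < \delta/2 < \mathrm{dist}(y, \partial D)$, so there exists $s$ with $|y - x_\infty| < s$ and $\overline{B^N(y, s)} \subset D$. Applying the observation to $y$ yields $u \in \mathcal{L}^1(B^N(y, s))$, and since $x_\infty \in B^N(y, s)$ this places $x_\infty$ in $A$, proving $A$ is closed in $D$.

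The main obstacle, and essentially the only step that needs real care, is this last radius juggling: one must extract from the ball of integrability around $x_0$ a single point $y$ at which $u$ is finite, and then re-center at $y$ with a radius large enough to include $x_\infty$ but small enough to keep the closed ball inside $D$. Once the constants $\delta/4$ and $\delta/2$ are fixed, everything else — openness of $A$, connectedness of $D$, and the fact that the set $\{u = -\infty\}$ has measure zero inside any ball on which $u \in \mathcal{L}^1$ — is immediate.
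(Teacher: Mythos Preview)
Your argument is correct and follows the same connectedness strategy as the paper: exhibit an open-and-closed subset of $D$ and conclude it is empty or all of $D$. The difference is one of orientation. The paper assumes $u\notin\mathcal{L}^1_{\mathrm{loc}}(D)$, uses a finite cover of a compact set $K$ to locate a ball on which $\int u=-\infty$, and then shows that $\mathrm{Int}\,\{u=-\infty\}$ is nonempty and clopen in $D$; you instead work with the ``good'' set $A$, and your key observation --- that $u(x)>-\infty$ together with $u^+\in\mathcal{L}^1_{\mathrm{loc}}$ already forces $u\in\mathcal{L}^1$ on every admissible ball about $x$ --- lets you bypass the covering step entirely. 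Your version is a bit leaner; the paper's is more explicit about how the failure of local integrability first manifests. One small wrinkle: to conclude $x_\infty\in A$ at the end you need a ball \emph{centered at} $x_\infty$, so strictly you should pass from $B^N(y,s)$ to some $B^N(x_\infty,t)\subset B^N(y,s)$ with $0<t<s-|y-x_\infty|$; this is trivial but worth making explicit.
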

\begin{proof}
Suppose $u \notin \mathcal L_{\mathrm{loc}}^1(D)$. Then there is a compact set $K \subset D$ such that
\begin{equation}\label{eq1.2}
\int_K u(y)\,dm_N(y) = -\infty.
\end{equation}
Since $K$ is compact and $D$ is open, we have
\[
\textrm{dist}(K, \partial D) = \inf_{x \in K,\ y \in \partial D} |x-y|>0.
\]
Let $\varepsilon$ be a positive real number satisfying the inequality
\begin{equation}\label{eq1.3}
3\varepsilon < \textrm{dist}(K, \partial D).
\end{equation}
We can find a finite set of balls $B^N(x_1, \varepsilon)$, $\ldots$, $B^N(x_m, \varepsilon)$ such that $x_i \in K$ for every $i\in \{1,\ldots, m\}$ and
\[
K \subseteq \bigcup_{i=1}^m B^N(x_i, \varepsilon) \subseteq D.
\]
These inclusions and \eqref{eq1.2} imply
\begin{equation}\label{eq1.4}
\int_{B^N(x_{i_0}, \varepsilon)} u(y)\,dm_N(y) = -\infty
\end{equation}
for some $i_{0}\in\{1, ..., m\}.$
It follows from \eqref{eq1.3} and $x_{i_0}\in K$ that $$B^N(x_{i_0}, \varepsilon) \subset B^N(x, 2\varepsilon) \subseteq D$$ holds for every $x \in B^{N}(x_{i_0}, \varepsilon)$. Using~\eqref{eq1.4} we obtain
\[
\int_{B^N(x, 2\varepsilon)} u(y)\,dm_N(y) = -\infty
\]
for every $x \in B^N(x_{i_0}, \varepsilon)$. Since $u$ is nearly subharmonic, it follows that
\[
-\infty \leq u(x) \leq \frac{1}{\nu _N (2\varepsilon)^{N}}\int_{B^N(x, 2\varepsilon)} u(y)\,dm_N(y) = -\infty,
\]
i.e., $u(x)=-\infty$ for every $x \in B^{N}(x_{i_0}, \varepsilon)$. Write
\[
A = \{x \in D: u(x) = -\infty\}.
\]
Since $B^N(x_{i_0}, \varepsilon) \subseteq A$, the interior of $A$ is non-void, $\textrm{Int}(A) \neq \varnothing$. To complete the proof, it is sufficient to show that $\textrm{Int}(A) = D$. If the last equality does not hold, then there is a point $y^*\in D\cap \partial \textrm{Int}(A)$. Let $0<\delta^*<\frac{1}{2}\textrm{dist}(y^*, \partial D)$. Then for every $y\in B^{N}(y^*, \delta^*)$ we have
\[
D \supseteq B^N(y, 2\delta^*) \text{ and } B^N(y, 2\delta^*) \cap \textrm{Int}(A) \neq \varnothing.
\]
Consequently $u(y)=-\infty$ holds for every $y \in B^N(y^*, \delta^*)$. Thus $y^* \in \textrm{Int}(A)$, contrary to $y^* \in \partial \textrm{Int}(A)$.
\end{proof}


The following proposition is well known under the additional condition $u \in \mathfrak{L}_{\mathrm{loc}}^{1}(D)$.

\begin{proposition}\label{pr1.3}
Let $D$ be a domain in ${\mathbb{R}}^N$, $N\geq 2$ and let $u:\,D\rightarrow [-\infty ,+\infty )$ be Lebesgue measurable.
Then  $u$ is nearly subharmonic in $D$ if and only if there exists a subharmonic in $D$ function $u^*$ such that $u^*(x)\geq u(x)$ for all $x \in D$ and $u^*(x)=u(x)$ holds Lebesgue almost everywhere.
\end{proposition}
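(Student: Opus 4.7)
The plan is to prove the two implications separately, handling the forward direction by invoking Lemma~\ref{lem1.2} to reduce to the locally integrable case and then applying the classical construction of the subharmonic representative.

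For the easy direction, I would assume the existence of a subharmonic $u^*$ with $u^* \geq u$ everywhere and $u^* = u$ a.e. Lebesgue measurability of $u$ follows from that of $u^*$ together with the a.e.\ equality. Since $u^*$ is subharmonic, $u^* \in \mathcal{L}^1_{\mathrm{loc}}(D)$ (or $u^* \equiv -\infty$), so $(u^*)^+$ and hence $u^+ = (u^*)^+$ a.e.\ lies in $\mathcal{L}^1_{\mathrm{loc}}(D)$. For any admissible ball $\overline{B^N(x,r)} \subset D$, the sub-mean-value property of $u^*$ combined with $u^* = u$ a.e.\ yields
\[
u(x) \leq u^*(x) \leq \frac{1}{\nu_N r^N} \int_{B^N(x,r)} u^*(y)\,dm_N(y) = \frac{1}{\nu_N r^N} \int_{B^N(x,r)} u(y)\,dm_N(y),
\]
so $u$ is nearly subharmonic in the sense of Definition~\ref{def1.1}.

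For the forward direction, I would first apply Lemma~\ref{lem1.2} to split into two cases. If $u \equiv -\infty$ on $D$, then $u^* := u$ itself works by the convention that $-\infty$ is subharmonic. Otherwise, $u \in \mathcal{L}^1_{\mathrm{loc}}(D)$, and I would construct $u^*$ explicitly by setting
\[
u^*(x) := \lim_{r \to 0^+} \frac{1}{\nu_N r^N} \int_{B^N(x,r)} u(y)\,dm_N(y)
\]
for each $x \in D$. The key preliminary fact is that under the sub-mean-value inequality \eqref{eq1.1}, the ball averages $A_r u(x)$ are monotone non-decreasing in $r$ (as long as $\overline{B^N(x,r)} \subset D$); this is obtained by writing the average over a larger ball as a weighted integral, in polar coordinates, of averages over smaller balls centered at $x$, and applying \eqref{eq1.1} to the inner averages. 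Hence the limit exists and equals $\inf_r A_r u(x)$.

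With $u^*$ so defined, the sub-mean-value property immediately gives $u(x) \leq A_r u(x)$ for all admissible $r$, hence $u(x) \leq u^*(x)$ pointwise, and the Lebesgue differentiation theorem (applicable since $u \in \mathcal{L}^1_{\mathrm{loc}}(D)$) yields $u^*(x) = u(x)$ for almost every $x$. The main obstacle, as I see it, is verifying that $u^*$ is genuinely subharmonic: upper semicontinuity follows from writing $u^*$ as a decreasing limit of the functions $x \mapsto A_r u(x)$, each of which is continuous (indeed, the continuity of translation in $\mathcal{L}^1$ applies because $u \in \mathcal{L}^1_{\mathrm{loc}}(D)$). For the mean-value inequality on $u^*$, I would use Fubini's theorem to exchange the order of integration, apply the monotone (or dominated) convergence theorem as $r \to 0^+$ inside an outer ball average, and exploit the a.e.\ equality $u^* = u$ to reduce the mean-value inequality for $u^*$ to the one already known for $u$.
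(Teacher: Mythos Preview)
Your overall reduction is exactly what the paper does: invoke Lemma~\ref{lem1.2} to dispose of the case $u\equiv-\infty$ and reduce to $u\in\mathcal L^1_{\mathrm{loc}}(D)$. At that point the paper simply observes that the statement is a reformulation of Theorem~1 in Herv\'e~\cite{Her71}, p.~14, whereas you attempt to reprove that classical theorem by constructing $u^*$ as the decreasing limit of ball averages. Your reverse implication is clean and correct.

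The one genuine weak spot is your justification of the monotonicity of $r\mapsto A_r u(x)$. You write that it follows by ``writing the average over a larger ball as a weighted integral, in polar coordinates, of averages over smaller balls centered at $x$, and applying~\eqref{eq1.1} to the inner averages.'' Decomposing $A_R u(x)$ in polar coordinates expresses it in terms of \emph{spherical} means $S_\rho u(x)$, not ball means; and inequality~\eqref{eq1.1} only gives $u(x)\le A_r u(x)$, which says nothing about the relation between $S_\rho$ and $A_\rho$ or between $A_r$ and $A_R$. So the argument as sketched does not go through. The monotonicity claim is true, but the standard route is via mollification: for a radial mollifier $\phi_\varepsilon$ one checks that $u_\varepsilon=u*\phi_\varepsilon$ again satisfies the ball sub-mean-value inequality, hence (being smooth) has $\Delta u_\varepsilon\ge 0$ and therefore non-decreasing ball averages; since $u\in\mathcal L^1_{\mathrm{loc}}(D)$ one has $A_r u_\varepsilon(x)\to A_r u(x)$ as $\varepsilon\to 0$, and monotonicity passes to the limit. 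Once monotonicity is secured, the rest of your construction (USC of $u^*$ as an infimum of continuous functions, $u\le u^*$, $u^*=u$ a.e.\ by Lebesgue differentiation, and the sub-mean-value inequality for $u^*$ via $A_R u^*=A_R u\ge u^*(x)$) is correct.
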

\begin{proof}
If $u(x)\equiv -\infty$ then, the proposition is evident. In the opposite case by Lemma~\ref{lem1.2} we have $u \in \mathcal L_{\mathrm{loc}}^1(D)$, and it is a reformulation of Theorem~1 from~\cite{Her71}, p.~14.
\end{proof}

\begin{remark}\label{r1.4}
In particular, if $u$ is nearly subharmonic, then we can take $u^*$ as the lowest  upper semicontinuous majorant of $u$:
\[
u^*(x)=\limsup_{x'\rightarrow x}\ u(x').
\]
\end{remark}

Observe also that the \textit{almost subharmonic functions}, by Szpilrajn \cite{Sz33}, are included in Definition~\ref{def1.1} in the following sense. Let  $u: \, D\rightarrow [-\infty ,+\infty )$ be almost subharmonic, that is, $u\in {\mathcal{L}}^1_{\textrm{loc}}(D)$ and inequality~\eqref{eq1.1} is satisfied for Lebesgue almost every $x\in D$ with all $\overline{B^N(x,r)}\subset D$ . Let
\[
D_1:=\{x\in D\,:\, u(x)\leq \frac{1}{\nu _N\, r^N}\int\limits_{B^N(x,r)}u(y)\, dm_N(y)\, {\textrm{ for all }}\overline{B^N(x,r)}\subset D\,   \}.
\]
Define $\tilde {u}:\, D\rightarrow [-\infty ,+\infty )$ as
\[
\tilde {u}(x):=\begin{cases}u(x), &{  \textrm{ when }}x\in D_1,\\
-\infty , &{  \textrm{ when }}x\in D\setminus D_1.\end{cases}
\]
By assumption $m_N(D\setminus D_1)=0$, it is easy to see that $\tilde {u}$ is nearly subharmonic in $D$.

In the connection with almost subharmonic functions see also \cite{CoRa97} and \cite{Ra37}, p.~20, and  \cite{LiLo01}, p.~238. Lieb and Loss even call the almost subharmonic functions briefly subharmonic functions.

\section{Quasinearly subharmonic functions}

Let $D$ be a domain in ${\mathbb{R}^N}$, $N\geq 2$. It is an important fact that if  $u: D\rightarrow [0,+\infty )$ is subharmonic and $p>0$, then  there exists a constant $K=K(N,p)>0$ such that the inequality
\begin{equation}\label{eq1.5}
u(x)^p\leq \frac{K}{\nu _N\, r^N}\int\limits_{B^N(x,r)}u(y)^p\, dm_N(y)
\end{equation}
holds for all $\overline{B^N(x,r)}\subset D$. In the case of $p=1$ and $K=1$, inequlity \eqref{eq1.5} is just the familiar mean value inequality for (nonnegative) subharmonic functions. The case $p>1$ follows from the the case $p=1$ with the aid of Jensen's inequality. The case $0<p<1$ has been given in Fefferman and Stein \cite{FeSt72}, Lemma~2, p.~172  and in \cite{Ku74}, Theorem~1, p.~529, where, however, only absolute values of harmonic functions were considered. The proofs in \cite{FeSt72} and in \cite{Ku74} are somewhat long. See also \cite{Ga07}, Lemma~3.7, p.~116, and  \cite{AhRu93}, (1.5), p.~210. In \cite{Rii89}, Lemma, p.~69, it was pointed out that the proof in \cite{FeSt72} includes the case of nonnegative subharmonic functions, too. See also \cite{Su90}, p. 271, \cite{Su91}, p. 114, \cite{Ha92}, Lemma~1, p.~113,  \cite{St98}, Lemma~3, p.~305, \cite{St02}, p.~794,  \cite{St04}, \cite{St13}, Lemma~1, p.~363, \cite{St14}, Lemma~2.1, p.~7, \cite{DjPa07$_2$}, Theorem~A, p.~413, and \cite{AhBr88}, p.~132. Observe that a possibility for an essentially different proof was pointed out already in \cite{To86}, pp.~188-190. Later other different proofs were given in \cite{Pa94}, p.~18 and Theorem~1, p.~19 (see also \cite{Pa96$_2$}, Theorem~A, p.~15), \cite{Rii00}, pp.~233-234, \cite{Rii01}, p.~188. The results in \cite{Pa94}, \cite{Rii00} and \cite{Rii01} hold in fact for more general function classes than just for nonnegative subharmonic functions. Compare also \cite{DiTr84}, \cite{Do57}, p.~430, and \cite{Do88}, p.~485.

Inequality \eqref{eq1.5} has many applications. Among others, it has been applied to the weighted boundary behavior of subharmonic functions and to the nonintegrability of subharmonic or superharmonic functions.

It is therefore relevant to find a generalization of results related to inequality~\eqref{eq1.5}. We will do this in the following way.

Let $D$ be a domain in $\mathbb R^N$, $N \geq 2$. For every $u:D\to [-\infty, +\infty)$ and $M\ge 0$ we write $u_{M}:=\max\{u, -M\}+M.$

\begin{definition}\label{def1.4}
Let $K\in[1, +\infty).$ A Lebesgue measurable function $u:\,D \rightarrow [-\infty ,+\infty )$ is \emph{$K$-quasinearly subharmonic}, if  $u^+\in{\mathcal{L}}^{1}_{\mathrm{loc}}(D)$ and the inequality
\begin{equation*}
u_M(x)\leq \frac{K}{\nu _N\,r^N}\int\limits_{B^N(x,r)}u_M(y)\, dm_N(y)
\end{equation*}
holds for all $M\geq 0$ and $\overline{B^N(x,r)}\subset D.$ A function $u:\, D\rightarrow [-\infty ,+\infty )$ is \emph{quasinearly subharmonic}, if $u$ is
$K$-quasinearly subharmonic for some $K$.
\end{definition}

In addition to the above defined class of quasinearly subharmonic functions, we will consider their proper subclass.

\begin{definition}\label{def1.5}
A~Lebesgue  measurable function $u:\,D \rightarrow [-\infty ,+\infty )$ is \emph{$K$-quasinearly subharmonic n.s. (in the narrow sense)}, if $u^+\in{\mathcal{L}}^{1}_{\mathrm{loc}}(D)$ and if there is a constant $K=K(N,u,D)\geq 1$ such that the inequality
\begin{equation*}
u(x)\leq \frac{K}{\nu _N\,r^N}\int\limits_{B^N(x,r)}u(y)\, dm_N(y)
\end{equation*}
holds for all $\overline{B^N(x,r)}\subset D$.
A function $u:\, D\rightarrow [-\infty ,+\infty )$ is \emph{quasinearly subharmonic n.s.}, if $u$ is $K$-quasinearly subharmonic n.s. in $D$ for some $K$.
\end{definition}

For a function $u$ is defined on an open set $\Omega \subseteq \mathbb R^n$, the quasinearly subharmonicity (quasinearly subharmonicity n.s.) of $u$ means that the restriction of $u$ to arbitrary connected component of $\Omega$ is quasinearly subharmonic (quasinearly subharmonic n.s.).

Observe that if $u:\,D \rightarrow [0,+\infty )$ is  subharmonic and $p>0$, then $u^p$ is quasinearly subharmonic n.s. and thus also quasinearly subharmonic, see statement (1) and statement (4) of Proposition~\ref{pr1.4} below and also~\cite{DoRi13}.

More generally, the class of quasinearly subharmonic functions includes, among others the subharmonic and nearly subharmonic functions and also the quasisubharmonic functions (for the definition of this see \cite{Rii07$_4$} and \cite{Her71}), also functions satisfying certain natural  growth conditions, especially certain eigenfunctions,  polyharmonic functions, subsolutions of certain general elliptic equations.

Let $D$ be a domain in $\mathbb R^{N},$ $N\ge 2.$ Recall that a continuous function $u: D\to[0, \infty)$ is said to be a Harnack function if there are $\lambda\in(0, 1)$ and $C_{\lambda}\in[1, \infty)$ such that the following Harnack inequality
$$\max\limits_{z\in B^{N}(x, \lambda r)}u(z)\le C_{\lambda}\min\limits_{z\in B^{N}(x, \lambda r)}u(z)$$
holds whenever $B^{N}(x, r)\subseteq D.$ See \cite{Vu82}, p.~259.
Every Harnack function is quasinearly subharmonic. This implies the quasinearly subharmonicity of nonnegative harmonic functions as well as nonnegative solutions of some elliptic equations. In particular, the partial differential equations associated with quasiregular mappings belong to this family of elliptic equations, see Vuorinen \cite{Vu82} and the above references.

Observe that already Domar  \cite{Do57} has pointed out the relevance of the class of (nonnegative) quasinearly subharmonic functions. For, at least partly, more general function class, see \cite{Do88}.


We list below four simple examples of quasinearly subharmonic functions.
\begin{example}\label{ex1}
Let $D$ be a domain in ${\mathbb{R}}^N$, $N\geq 2$. Any Lebesgue measurable function   $u:\,D\rightarrow [m,M]$,
where $0<m\leq M<+\infty $, is quasinearly subharmonic n.s. and, because of Proposition~\ref{pr1.4} (see below), also quasinearly subharmonic. If $u$ is moreover
continuous, then $u$ is a Harnack function.
\end{example}

\begin{example}\label{ex2}
The function  $u:\,{\mathbb{R}}^2\rightarrow {\mathbb{R}}$,
\begin{displaymath}
u(x,y):=\begin{cases}-1, & {\textrm{when }}\, y<0\\
1, & {\textrm{when }}\, y\geq 0,\end{cases}
\end{displaymath}
is $2$-quasinearly subharmonic, but not quasinearly subharmonic n.s..
\end{example}

\begin{example}\label{ex3}
Let $D=(0,2)\times (0,1)\subset {\mathbb{R}}^2$ and let $c<0$ be  arbitrary. Let $E\subset D$ be a Borel set of zero Lebesgue measure. Let
$u:\,D\rightarrow [-\infty ,+\infty )$,
\begin{displaymath}
u(x,y):=\begin{cases}c , & {\textrm{ when }}\, (x,y)\in E,\\
1, &{\textrm{ when }}\, (x,y)\in D\setminus E  {\textrm{ and }}\, 0<x<1,\\
2, & {\textrm{ when }}\, (x,y)\in D\setminus E {\textrm{ and }}\, 1\leq x<2.
\end{cases}
\end{displaymath}
The function $u$ attains both negative and positive values, is $2$-quasinearly subharmonic n.s, but not nearly subharmonic.
\end{example}

\begin{example}
Let $D$ be a domain in ${\mathbb{R}}^N$, $N\geq 2$, and let $u:\,D\rightarrow [-\infty ,+\infty )$ be a quasinearly
subharmonic function (resp. quasinearly subharmonic n.s.). Let $E\subset D$ be a Borel set of zero Lebesgue measure. Let
$v:\,D\rightarrow [-\infty ,+\infty )$,
\begin{displaymath}
v(x):=
\begin{cases}-\infty , & {\textrm{ when }}\, x\in E,\\
u(x), &{\textrm{ when }}\, x\in D\setminus E.
\end{cases}
\end{displaymath}
The function $v$ is quasinearly subharmonic (resp. quasinearly subharmonic n.s.).
\end{example}

The term quasinearly subharmonic function was first introduced in \cite{Rii04$_1$}. Quasinearly subharmonic functions (sometimes with a different terminology), or, essentially, perhaps just functions satisfying a certain  generalized mean value
inequality, have previously been considered, or used, in addition to the above listed references at least in  \cite{Miz96}, \cite{Rii04$_1$}, \cite{Rii04$_2$}, \cite{Rii05},  \cite{Rii07$_1$}, \cite{Rii07$_2$}, \cite{DjPa07$_1$}, \cite{Ko07}, \cite{Rii08$_2$}, \cite{Rii09$_1$}, \cite{DoRi10$_1$}, \cite{DoRi10$_2$}, \cite{KoMa12} and \cite{Mih13}.

\section{Basic properties of quasinearly subharmonic functions}

Recall that a function $\varphi :\,[0,+\infty )\rightarrow [0,+\infty )$ satisfies a $\varDelta_2$-\emph{condition}, if there is a constant $C=C(\varphi )\geq 1$
such that $\varphi (2t)\leq C\,\varphi (t)$ for all $t\in [0,+\infty )$.

\begin{definition}\label{def1.7}
A function $\psi :\, [0,+\infty )\rightarrow [0,+\infty )$ is \textit{permissible}, if there  exist an increasing (strictly or not), convex function $\psi_1 :\,[0,+\infty )\rightarrow [0,+\infty )$ and a strictly increasing surjection \mbox{$\psi_2 :\, [0,+\infty )\rightarrow [0,+\infty )$} such that $\psi =\psi_2\circ \psi_1$ and the following
conditions hold:
\begin{enumerate}
\renewcommand{\theenumi}{\alph{enumi}}
\item $\psi_1$  satisfies the $\varDelta_2$-condition,
\item $\psi_2^{-1}$ satisfies the $\varDelta_2$-condition,
\item the function $t\mapsto \frac{\psi_2(t)}{t}$ is \textit{quasi-decreasing}, i.e. there is a constant $C=C(\psi_2)>0$ such that
\[
\frac{\psi_2(s)}{s}\geq  C\, \frac{\psi_2(t)}{t}
\]
whenever $0< s\leq t$.
\end{enumerate}
\end{definition}

Permissible functions are necessarily continuous.

Examples of permissible functions are:  $\psi_1(t)=t^p$, $p>0$, and   $\psi_2(t)=c\, t^{p\alpha }[\log (\delta +t^{p\gamma })]^\beta $, $c>0$,
$0<\alpha <1$, $\delta \geq 1$, $\beta ,\gamma \in {\mathbb{R}}$ such that $0<\alpha +\beta \,\gamma <1$, and $p\geq 1$. And also functions of the form
$\psi _3=\phi \circ \varphi$, where $\phi :[0,+\infty )\rightarrow [0,+\infty )$ is a concave surjection whose inverse $\phi ^{-1}$ satisfies
the $\Delta_2$-condition and $\varphi :[0,+\infty )\rightarrow [0,+\infty )$ is an increasing, convex function satisfying the $\Delta_2$-condition.

It is interesting to note  the following fact, see \cite{PaRi08}, Lemma~1 and Remark~1, p.~93:

\emph{Let $\psi :\, [0,+\infty )\rightarrow [0,+\infty )$ be  a permissible function. Then}
\begin{enumerate}
\item \emph{there are a number  $p>0$ and a convex function $M:\,[0,+\infty )\rightarrow [0,+\infty )$ satisfying the $\varDelta_2$-condition such that
$\psi (t)\asymp g(t^p)$, that is, there exist constants $C_1>0$ and $C_2>0$ such that}
\[
C_1\leq \frac{\psi (t)}{g(t^p)}\leq C_2
\]
\emph{for all $t>0$};
\item \emph{there are a number  $p>0$ and a convex function $\vartheta :\,[0,+\infty )\rightarrow [0,+\infty )$ satisfying the $\varDelta_2$-condition  such that
$\psi (t)\asymp \vartheta (t)^p$.}
\end{enumerate}

Next we list certain basic  properties of quasinearly subharmonic functions, see \cite{Rii07$_4$}, Proposition~2.1  and Proposition~2.2 and \cite{Rii09$_2$}, Proposition~1.5.1.
\begin{proposition}\label{pr1.4}
Let $D$ be a domain in ${\mathbb{R}}^N$, $N\geq 2$.
\begin{enumerate}
\item If $u:\, D\rightarrow [0 ,+\infty )$ is Lebesgue measurable and $u^+\in\mathcal{L}^{1}_{\textrm{loc}}(D)$, then $u$ is  $K$-quasinearly subharmonic if and only if $u$ is $K$-quasinearly subharmonic n.s., that is, if
\[
u(x)\leq \frac{K}{\nu _N\, r^N}\int\limits_{B^N(x,r)}u(y)\, dm_N(y)
\]
holds for all $\overline{B^N(x,r)}\subset D$.
\item If  $u:\, D\rightarrow [-\infty  ,+\infty )$ is $K$-quasinearly subharmonic n.s., then $u$ is $K$-quasinearly subharmonic in $D$, but not necessarily conversely.

\item A function $u:\, D\rightarrow [-\infty ,+\infty )$ is $1$-quasinearly subharmonic if and only if it is nearly subharmonic, that is, $1$-quasinearly subharmonic n.s.

\item If $u:\, D\rightarrow [0,+\infty )$ is quasinearly subharmonic and $\psi :\, [0,+\infty )\rightarrow [0,+\infty )$ is permissible, then $\psi \circ u$ is quasinearly subharmonic in $D$. Especially, if $h: D\rightarrow {\mathbb{R}}$ is harmonic and $p>0$, then $\mid h\mid ^p$ is quasinearly subharmonic.
\item The Harnack functions are quasinearly subharmonic.
\end{enumerate}
\end{proposition}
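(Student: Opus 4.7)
The plan is to go through the five statements in order. The bulk of statements~(1)--(3) rests on the pointwise identity $u_M(x)=u(x)+M$, which holds whenever $u(x)\ge -M$ (in particular whenever $u\ge 0$). For~(1), substituting this identity into the $K$-quasinearly subharmonic inequality and cancelling the additive $M$ from both sides shows that the family of inequalities indexed by $M\ge 0$ is equivalent to the single inequality at $M=0$, which is precisely the $K$-quasinearly subharmonic n.s.\ condition. For~(2) one splits on the sign of $u(x)+M$: if $u(x)\ge -M$ then $u_M(x)=u(x)+M$ and the n.s.\ inequality together with the pointwise bound $u_M(y)\ge u(y)+M$ and $K\ge 1$ yield $u_M(x)\le K(\nu_N r^N)^{-1}\int u_M\,dm_N$; if $u(x)<-M$ then $u_M(x)=0$ is trivially dominated by the nonnegative right-hand side. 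Example~\ref{ex2} exhibits the failure of the converse. For~(3) the only nontrivial direction is that $1$-quasinearly subharmonic implies nearly subharmonic: assuming $u(x)>-\infty$, cancel $M$ on both sides of the $M$-th inequality to obtain $u(x)\le (\nu_Nr^N)^{-1}\int_{B^N(x,r)}\max\{u,-M\}\,dm_N$ for all $M$ large, and let $M\to\infty$ using monotone convergence (with $u^+\in\mathcal{L}^1_{\mathrm{loc}}(D)$ dominating the decreasing integrands).

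For part~(4) I would use the representation $\psi(t)\asymp\vartheta(t)^p$ recalled just before Proposition~\ref{pr1.4}, with $\vartheta$ convex satisfying $\varDelta_2$ and $p>0$; since $\asymp$-equivalent nonnegative functions are simultaneously quasinearly subharmonic (up to a change of constant), this reduces the problem to showing $(\vartheta\circ u)^p$ is quasinearly subharmonic. Step~A is to show that $\vartheta\circ u$ itself is quasinearly subharmonic. Starting from $u(x)\le K(\nu_Nr^N)^{-1}\int u\,dm_N$, which holds by~(1) because $u\ge 0$, the monotonicity of $\vartheta$, Jensen's inequality for convex $\vartheta$, and the $\varDelta_2$-condition applied to absorb the multiplicative factor $K$ inside $\vartheta$ combine to give $\vartheta(u(x))\le C(\nu_Nr^N)^{-1}\int\vartheta(u)\,dm_N$. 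Step~B is to raise to the power $p$; for $p\ge 1$ the same Jensen-plus-$\varDelta_2$ machinery applies to $t\mapsto t^p$ and concludes.

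The main obstacle is the case $0<p<1$ of Step~B, where $t\mapsto t^p$ is concave and Jensen goes the wrong way. Here one must invoke a Fefferman--Stein/Kuran-type argument of the sort catalogued in the discussion surrounding~\eqref{eq1.5}; the one I would adopt first upgrades the quasinearly subharmonic mean bound on $w:=\vartheta\circ u$ to a local supremum bound $\sup_{B^N(x,r/2)}w\le C(\nu_N r^N)^{-1}\int_{B^N(x,r)}w\,dm_N$ by a second application of quasinearly subharmonicity on balls contained in $B^N(x,r)$, and then combines this with the elementary estimate $\int_{B'}w\le (\sup_{B'}w)^{1-p}\int_{B'}w^p$ and an iteration over concentric balls to produce a mean-value inequality for $w^p$ with constant depending only on $p$, $N$, and the data of $\vartheta$ and $u$. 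The concluding remark of~(4) then follows at once: $|h|$ is subharmonic (hence quasinearly subharmonic) for any harmonic $h$, and $\psi(t)=t^p$ is permissible for every $p>0$.

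Part~(5) is a direct consequence of the Harnack inequality. Given $\overline{B^N(x,r)}\subset D$ we have $B^N(x,r)\subseteq D$, so the Harnack inequality applies on the concentric ball $B^N(x,\lambda r)$; since $x\in B^N(x,\lambda r)$, every $z\in B^N(x,\lambda r)$ satisfies $u(z)\ge u(x)/C_\lambda$. Integrating over $B^N(x,\lambda r)\subset B^N(x,r)$ yields
\[
\frac{1}{\nu_N r^N}\int_{B^N(x,r)}u\,dm_N\;\ge\;\frac{1}{\nu_N r^N}\cdot\frac{u(x)}{C_\lambda}\cdot\nu_N(\lambda r)^N\;=\;\frac{\lambda^N}{C_\lambda}\,u(x),
\]
so $u$ is $K$-quasinearly subharmonic n.s.\ with $K=C_\lambda\lambda^{-N}$, and by~(1) it is quasinearly subharmonic.
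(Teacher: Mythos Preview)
Your argument is correct and consistent with the paper's treatment. The paper itself leaves (1), (2), and (5) to the reader, cites \cite{Rii06$_2$} for (4), and proves (3) exactly as you do via the Lebesgue Monotone Convergence Theorem; your write-up therefore matches the paper where it gives details and supplies reasonable arguments where the paper defers.

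One small point of phrasing in (1): saying you ``cancel the additive $M$ from both sides'' is misleading, because after substituting $u_M=u+M$ the right-hand side carries $KM$, not $M$. The equivalence holds because taking $M=0$ gives the n.s.\ inequality, while conversely $u(x)+M\le K\,\mathrm{mean}(u)+M\le K\,\mathrm{mean}(u)+KM=K\,\mathrm{mean}(u_M)$ uses $K\ge 1$, not cancellation. The same use of $K\ge 1$ is what makes your argument for (2) go through, and you do invoke it there explicitly. Everything else, including the Harnack computation in (5) with $K=C_\lambda\lambda^{-N}$ and the Jensen/$\varDelta_2$/Fefferman--Stein outline for (4), is sound.
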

\begin{proof}
We leave statements~(1), (2) and (5) to the reader. For the proof of statement (4), see \cite{Rii06$_2$}, Lemma 2.1, p.~32. To prove statement~(3) suppose that  $u$ is nearly subharmonic in $D$. Then clearly $u_M$ is nearly subharmonic for all $M\geq 0$, and thus for every $\overline{B^N(x,r)}\subset D$, one has
\begin{equation*}
u_M(x)\leq \frac{1}{\nu _N\,r^N}\int\limits_{B^N(x,r)}u_M(y)\, dm_N(y).
\end{equation*}
Hence $u$ is $1$-quasinearly subharmonic.

On the other hand, if $u$ is $1$-quasinearly subharmonic in $D$, then one sees at once, with the aid of the  Lebesgue Monotone Convergence Theorem, that
$u$ is nearly subharmonic in $D$.
\end{proof}

\begin{proposition}\label{pr1.5}
Let $D$ be a domain in ${\mathbb{R}}^N$, $N\geq 2$.
\begin{enumerate}
\item If  $u:\, D\rightarrow [-\infty ,+\infty )$   is $K_1$-quasinearly subharmonic and $K_2\geq K_1$, then $u$ is $K_2$-quasinearly subharmonic.

\item If $u_1:\, D\rightarrow [-\infty ,+\infty )$    and $u_2:\, D\rightarrow [-\infty ,+\infty )$   are $K$-quasinearly subharmonic n.s.,   then $\lambda_1 u_1+\lambda _2u_2$ is $K$-quasinearly subharmonic n.s. for all  $\lambda_1,\, \lambda_2   \geq 0$.

\item If  $u:\, D\rightarrow [-\infty ,+\infty )$ is quasinearly subharmonic, then $u$ is locally bounded above.

\item If $u_j:\, D\rightarrow [-\infty ,+\infty )$, $j=1,2,\dots$, are $K$-quasinearly subharmonic (resp. $K$-quasinearly subharmonic n.s.), and $u_j\searrow u$ as $j\rightarrow +\infty $, then $u$ is $K$-quasinearly subharmonic (resp. $K$-quasinearly subharmonic n.s.).

\item If $u:\, D\rightarrow [-\infty ,+\infty )$   is $K_1$-quasinearly subharmonic and $v:\, D\rightarrow [-\infty ,+\infty )$ is $K_2$-quasinearly subharmonic, then $\max\{u,v\}$ is $K$-quasine\-arly subharmonic in $D$ with $K=\max\{K_1,K_2\}$. Especially,  $u^+:=\max\{u,0\}$ is $K_1$-quasinearly subharmonic.

\item Let ${\mathcal{F}}$ be a family of  $K$-quasinearly subharmonic (resp. $K$-quasinearly subharmonic n.s.) functions in $D$ and let $w:=\sup_{u\in \mathcal{F}}u$. If $w$ is Lebesgue measurable and $w^+\in {\mathcal{L}}_{{\mathrm{loc}}}^1(D)$, then $w$ is $K$-quasinearly subharmonic (resp. $K$-quasinearly subharmonic n.s.).

\item If  $u:\, D\rightarrow [-\infty ,+\infty )$   is quasinearly subharmonic n.s., then either $u\equiv -\infty $ or $u$ is finite almost everywhere, and $u\in \mathcal{L}^1_{\mathrm{loc}}(D)$.
\end{enumerate}
\end{proposition}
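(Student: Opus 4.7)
The plan is to reduce this to essentially the same dichotomy already established in Lemma~\ref{lem1.2}, since a $1$-quasinearly subharmonic n.s.\ function is exactly a nearly subharmonic function (Proposition~\ref{pr1.4}(3)); the only novelty is the presence of the constant $K \geq 1$ in front of the mean-value term, which turns out not to affect the argument because the decisive quantity that appears is an integral equal to $-\infty$.

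First I would dispose of the "finite a.e." and $\mathcal{L}^1_{\mathrm{loc}}$ claims simultaneously: showing $u \in \mathcal{L}^1_{\mathrm{loc}}(D)$ automatically yields finiteness almost everywhere, so the real content is to prove the dichotomy "either $u\equiv -\infty$ or $u\in \mathcal{L}^1_{\mathrm{loc}}(D)$." To that end I would assume $u$ is $K$-quasinearly subharmonic n.s.\ with $u\not\equiv -\infty$ and $u\notin \mathcal{L}^1_{\mathrm{loc}}(D)$, and derive a contradiction. Since $u^+\in \mathcal{L}^1_{\mathrm{loc}}(D)$ by the definition, the failure of local integrability can come only from the negative part, hence there exists a compact set $K_0\subset D$ with $\int_{K_0} u\, dm_N = -\infty$.

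Next I would copy verbatim the covering step of Lemma~\ref{lem1.2}: pick $\varepsilon>0$ with $3\varepsilon < \mathrm{dist}(K_0,\partial D)$ and cover $K_0$ by finitely many balls $B^N(x_i,\varepsilon)\subset D$ with $x_i\in K_0$, to obtain an index $i_0$ for which $\int_{B^N(x_{i_0},\varepsilon)} u\, dm_N = -\infty$. Exactly as in Lemma~\ref{lem1.2}, for every $x\in B^N(x_{i_0},\varepsilon)$ one has the inclusion $B^N(x_{i_0},\varepsilon)\subset B^N(x,2\varepsilon)\subset D$, so the integral of $u$ over $B^N(x,2\varepsilon)$ is also $-\infty$. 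Now I invoke the defining inequality of $K$-quasinearly subharmonic n.s.\ with radius $2\varepsilon$: since the integral equals $-\infty$ and $K/(\nu_N(2\varepsilon)^N)>0$, one gets
\[
u(x)\leq \frac{K}{\nu_N(2\varepsilon)^N}\int_{B^N(x,2\varepsilon)}u(y)\,dm_N(y)=-\infty,
\]
so $u\equiv -\infty$ on $B^N(x_{i_0},\varepsilon)$. Thus the set $A=\{x\in D:u(x)=-\infty\}$ has nonempty interior.

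Finally I would close the proof with the same propagation-across-the-boundary argument used in Lemma~\ref{lem1.2}: if $\mathrm{Int}(A)\neq D$, pick $y^*\in D\cap \partial\,\mathrm{Int}(A)$, choose $\delta^*<\tfrac12\mathrm{dist}(y^*,\partial D)$, and note that for every $y\in B^N(y^*,\delta^*)$ the ball $B^N(y,2\delta^*)$ lies in $D$ and contains points of $\mathrm{Int}(A)$, hence contains a nonempty open set on which $u=-\infty$; consequently $\int_{B^N(y,2\delta^*)}u\,dm_N=-\infty$, and applying the $K$-quasinearly subharmonic n.s.\ inequality again forces $u(y)=-\infty$. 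Thus $B^N(y^*,\delta^*)\subset A$ is open, contradicting $y^*\in\partial\,\mathrm{Int}(A)$. So $\mathrm{Int}(A)=D$, i.e.\ $u\equiv -\infty$, contrary to our standing assumption. I do not anticipate any serious obstacle: the only point one must be slightly careful about is that the factor $K$ is finite and positive, so multiplication of a divergent integral by $K/(\nu_N r^N)$ still yields $-\infty$, which is exactly why the nearly subharmonic proof transfers without modification.
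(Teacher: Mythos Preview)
Your proposal is correct and matches the paper's approach exactly: the paper leaves statements (1)--(6) to the reader and for (7) simply remarks that ``a proof of (7) is completely similar to the proof of Lemma~\ref{lem1.2}.'' You have spelled out precisely that argument, correctly observing that the constant $K$ is irrelevant because the only use of the defining inequality is to conclude $u(x)\leq K\cdot(-\infty)=-\infty$.
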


We leave the simple statements~(1)--(6) to the reader and note only that a proof of (7) is completely similar to the proof of Lemma~\ref{lem1.2}.

\begin{remark}
If $u:D\to[-\infty, \infty)$ is strictly negative, finite and constant, then $u$ is nearly subharmonic but, for every $K>1,$ $u$ is not $K$-nearly subharmonic n.s. Thus, the analog of statement (1) does not hold for functions which are quasinearly subharmonic in narrow sense.
\end{remark}

\begin{remark}\label{r1.10}
Related to statement (2) above, it is easy to see that, if $u:\, D\rightarrow [-\infty ,+\infty )$  is $K$-quasinearly subharmonic, then $\lambda u+C$ is $K$-quasinearly subharmonic for all  $\lambda  \geq 0$ and $C\geq 0$.
\end{remark}

The following example shows that the sum of two quasinearly subharmonic functions can be not quasinearly subharmonic.

\begin{example}\label{ex5}
The function $u: \mathbb R^{2}\to\mathbb R$,
\begin{displaymath} u(x,y):=\begin{cases}3 , & {\textrm{when }}\, x = 0,\\
1, & {\textrm{when }}\, x\ne 0,\end{cases}\end{displaymath}
is $3$-quasinearly subharmonic. The constant function $v:\mathbb R^{2}\to\mathbb R,$ $$v(x,y)\equiv -2,$$ is harmonic. Then we have
\begin{displaymath} (u+v)(x,y):=\begin{cases}1 , & {\textrm{when }}\, x = 0\\
-1, & {\textrm{when }}\, x\ne 0\end{cases}\end{displaymath} and $$(u+v)_{M}=\max\{u+v, -M\}+M=(u+v+M)^{+}$$ for every $M\ge 0$.  In particular for $M=1$ we obtain \begin{displaymath} (u+v)_{1}(x,y):=\begin{cases}2 , & {\textrm{when }}\, x = 0\\
0, & {\textrm{when }}\, x\ne 0.\end{cases}\end{displaymath}
Since $(u+v)_{1}(0,0)>1$ and the double integral $\iint\limits_{B}(u+v)_{1}(x, y)dxdy$ is zero for every ball $B\subset\mathbb R^{2}$, the function $(u+v)_{1}$ is not quasinearly subharmonic. Hence $(u+v)$ is also not quasinearly subharmonic.
\end{example}

\begin{remark}\label{r1.11}
It is easy to see that the analog of statement (7) from Proposition~\ref{pr1.5} does not hold for quasinearly subharmonic functions. A counterexample is the function $u:\,{\mathbb{R}}^2\rightarrow [-\infty ,+\infty)$,
\begin{displaymath} u(x,y):=\begin{cases}-\infty , & {\textrm{when }}\, y\leq 0,\\
1, & {\textrm{when }}\, y>0,\end{cases}\end{displaymath}
which is $2$-quasinearly subharmonic, but surely not quasinearly subharmonic n.s.
\end{remark}

\section[Characterization of harmonic functions via qns-functions]{Characterization of harmonic functions via quasinearly subharmonic functions}

A subharmonic function $u: \Omega \to [-\infty, \infty)$ defined on an open $\Omega \subseteq \mathbb R^N$ is harmonic if and only if the function $-u$ is also subharmonic, \cite{HK}, p.~54. In this section we show that this remains true if one uses quasinearly subharmonic in the narrow sense  functions instead of subharmonic functions.

\begin{proposition}\label{lem1.11}
Let $D$ be a domain in $\mathbb R^N,$ $N\ge 2.$ Then the following statements are equivalent for every $u: D\to[-\infty,\infty).$
\begin{enumerate}
\item The function $u$ is harmonic.
\item There is $K\ge 1$ such that the functions $u$ and $-u$ are $K$-quasinearly subharmonic n.s.
\end{enumerate}
\end{proposition}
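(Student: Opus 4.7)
The plan is to establish the two implications separately. The direction (1)$\Rightarrow$(2) is immediate: a harmonic function $u$ is continuous (so $u^+$ and $u^-$ are locally bounded, hence locally integrable) and satisfies the classical mean value equality on every admissible ball, which gives the defining inequality of Definition~\ref{def1.5} for both $u$ and $-u$ with constant $K=1$.

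The substance lies in (2)$\Rightarrow$(1). Suppose $u$ and $-u$ are both $K$-quasinearly subharmonic n.s. Since both take values in $[-\infty,+\infty)$, we must have $u(x)>-\infty$ for every $x\in D$, so $u$ is real-valued; the local integrability of $u^+$ and $u^-=(-u)^+$ then gives $u\in\mathcal L^1_{\mathrm{loc}}(D)$. The defining inequality for $u$ reads $u(x)\le \frac{K}{\nu_N r^N}\int_{B^N(x,r)} u(y)\,dm_N(y)$, while the one for $-u$, upon negation, reads $u(x)\ge \frac{K}{\nu_N r^N}\int_{B^N(x,r)} u(y)\,dm_N(y)$, so for every admissible ball $\overline{B^N(x,r)}\subset D$,
\[
u(x) = \frac{K}{\nu_N r^N}\int_{B^N(x,r)} u(y)\,dm_N(y).
\]

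By the Lebesgue differentiation theorem the right-hand side with $K$ omitted tends to $u(x)$ as $r\to 0^+$ at almost every $x$, hence $(K-1)u(x)=0$ almost everywhere in $D$. If $K>1$, this forces $u=0$ almost everywhere, and then the integral in the displayed equality vanishes for every admissible ball, so we conclude $u(x)=0$ at \emph{every} $x\in D$; in particular $u$ is harmonic. If $K=1$, the displayed equality is the pointwise mean value property of $u$ on every admissible ball, and combined with $u\in\mathcal L^1_{\mathrm{loc}}(D)$ this is a classical condition forcing $u$ to be harmonic — convolving with a smooth radial mollifier shows that $u$ equals its own mollification on relatively compact subdomains, so $u\in C^\infty(D)$ and $\Delta u=0$. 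The main obstacle is precisely this last step, namely upgrading a pointwise mean value identity for an a priori only $\mathcal L^1_{\mathrm{loc}}$ function to classical harmonicity; but this is standard, so the real content of the argument is the symmetric manipulation that yields the above displayed identity.
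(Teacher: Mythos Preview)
Your proof is correct and reaches the same conclusion as the paper, but the route after the key identity
\[
u(x)=\frac{K}{\nu_N r^N}\int_{B^N(x,r)}u(y)\,dm_N(y)\qquad(\overline{B^N(x,r)}\subset D)
\]
differs. The paper proceeds by estimating $|u(x)-u(z)|$ against the integral of $|u|$ over the symmetric difference $B^N(x,r)\bigtriangleup B^N(z,r)$; absolute continuity of the integral then yields continuity of $u$, and continuity at any point where $u\neq 0$ forces $K=1$, after which the classical ``continuous plus mean value property'' criterion finishes the argument. You instead invoke the Lebesgue differentiation theorem to get $(K-1)u=0$ almost everywhere, dispose of the case $K>1$ by feeding $u=0$ a.e.\ back into the identity, and handle $K=1$ via mollification. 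Your derivation of $u\in\mathcal L^1_{\mathrm{loc}}(D)$ from $u^+,(-u)^+\in\mathcal L^1_{\mathrm{loc}}(D)$ is also more direct than the paper's appeal to Proposition~\ref{pr1.5}(7), and your observation that $u$ must be real-valued (since both $u$ and $-u$ map into $[-\infty,+\infty)$) is a point the paper leaves implicit. Both arguments are short; yours leans on standard measure-theoretic tools (Lebesgue differentiation, mollifiers), while the paper's keeps everything at the level of elementary estimates and the classical continuous mean value characterization of harmonicity.
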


\begin{proof}
The implication $(1)\Rightarrow (2)$ is trivial. Suppose statement (2) holds.
Since $u$ and $-u$ are $K$-quasinearly subharmonic n.~s., we have
\[
\frac{K}{\nu_N r^N}\int_{B^N(x, r)} u(y)\,dm_N(y) \leq u(x) \leq \frac{K}{\nu_N r^N}\int_{B^N(x, r)} u(y)\,dm_N(y)
\]
and, consequently,
\begin{equation}\label{eq1.8}
u(x) = \frac{K}{\nu_N r^N}\int_{B^N(x, r)} u(y)\,dm_N(y)
\end{equation}
holds whenever $\overline{B^N(x, r)} \subset D$. Using statement (7) from Proposition~\ref{pr1.5} we see that $u \in \mathfrak{L}_{\mathrm{loc}}^{1}(D)$. It follows from \eqref{eq1.8} for all $x$, $z \in D$ and sufficiently small $r>0$ that
\begin{equation}\label{eq1.9}
|u(x)-u(z)| \leq \frac{K}{\nu_N r^N}\int_{B^N(x, r) \bigtriangleup B^N(z, r)} |u(y)|\,dm_N(y),
\end{equation}
where $B^N(x, r) \bigtriangleup B^N(z, r)$ is the symmetric difference of the balls $B^N(x, r)$ and $B^N(z, r)$. Since
\[
\lim_{x \to z} m_N(B^N(x, r) \bigtriangleup B^N(z, r)) = 0,
\]
the absolute continuity of the Lebesgue integral and the condition $u \in \mathfrak{L}_{loc}^{1}(D)$ imply that $f$ is continuous on $D$. Let $x \in D$ and $u(x)\neq 0$. Equality~\eqref{eq1.8} and continuity of $u$ at the point $x$ imply that $K=1$. Every continuous function $u$ satisfying~\eqref{eq1.8} with $K=1$ for all $B^N(x, r)\subset D$ is harmonic. Statement (1) follows.
\end{proof}

\begin{corollary}
Let $D$ be a domain in $\mathbb R^N,$ $N\ge 2.$ Then a function $u: D\to[-\infty,\infty)$ is harmonic if and only if the functions $u$ and $-u$ are nearly subharmonic.
\end{corollary}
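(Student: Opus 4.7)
The plan is to derive this corollary as an immediate consequence of Proposition~\ref{lem1.11} by feeding it through the equivalence between nearly subharmonic functions and $1$-quasinearly subharmonic n.s.\ functions recorded in statement~(3) of Proposition~\ref{pr1.4}.

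For the forward implication, if $u$ is harmonic, then both $u$ and $-u$ are harmonic, hence subharmonic. Any subharmonic function is upper semicontinuous (so Lebesgue measurable), satisfies $u^{+}\in\mathcal{L}^{1}_{\mathrm{loc}}(D)$, and fulfills \eqref{eq1.1} with constant $1$; thus $u$ and $-u$ are both nearly subharmonic in the sense of Definition~\ref{def1.1}.

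For the backward implication, assume $u$ and $-u$ are nearly subharmonic. By statement~(3) of Proposition~\ref{pr1.4}, nearly subharmonicity is equivalent to $1$-quasinearly subharmonicity n.s., so both $u$ and $-u$ are $1$-quasinearly subharmonic n.s. This is exactly condition~(2) of Proposition~\ref{lem1.11} with the admissible choice $K=1$ (the quantifier there requires only $K\ge 1$). Applying Proposition~\ref{lem1.11} then yields that $u$ is harmonic.

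I do not foresee any genuine obstacle: the corollary is a direct specialization of Proposition~\ref{lem1.11} to the case $K=1$, and the only thing to verify is that the definitional bridge from nearly subharmonic to $1$-quasinearly subharmonic n.s.\ is already in place, which it is via Proposition~\ref{pr1.4}(3). If one wished to give a self-contained argument bypassing Proposition~\ref{lem1.11}, the key step would still be the same as there: combining the two opposite mean value inequalities (both with constant $1$) to force equality in the mean value property on every admissible ball, then invoking local integrability from Proposition~\ref{pr1.5}(7) and the absolute continuity of the Lebesgue integral to upgrade $u$ to a continuous, hence classical harmonic, function.
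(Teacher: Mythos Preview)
Your proposal is correct and matches the paper's approach: the corollary is stated immediately after Proposition~\ref{lem1.11} without proof, so the intended argument is exactly the specialization to $K=1$ via Proposition~\ref{pr1.4}(3) that you give. Your added remarks on the forward implication and on a self-contained route are fine but unnecessary for the purposes of the paper.
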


\begin{lemma}\label{lem1.10}
Let $D$ be a domain in $\mathbb R^N$, $N \geq 2$, let $u: D \to [-\infty, \infty)$ be $K_1$-quasinearly subharmonic {n.s.} and let $-u$ be $K_2$-quasinearly subharmonic {n.s.} If there is a point $y_0 \in D$ such that $u(y_0)>0$, then the inequality $K_1 \geq K_2$ holds.
\end{lemma}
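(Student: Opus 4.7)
The plan is to extract the desired inequality $K_1 \geq K_2$ by comparing the two mean value inequalities that $u$ and $-u$ are forced to satisfy. The key observation is that the hypotheses combine into a two-sided estimate relating $u(x)$ to its ball average.

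Concretely, I would first fix any radius $r>0$ with $\overline{B^N(y_0,r)}\subset D$ (possible since $D$ is open) and write $A(y_0,r) := \frac{1}{\nu_N r^N}\int_{B^N(y_0,r)} u(y)\,dm_N(y)$. Applying the definition of $K_1$-quasinearly subharmonic n.s.\ to $u$ at $y_0$ gives
\[
u(y_0) \leq K_1\, A(y_0,r),
\]
while applying the definition of $K_2$-quasinearly subharmonic n.s.\ to $-u$ at $y_0$ gives
\[
-u(y_0) \leq -K_2\, A(y_0,r), \qquad \text{i.e.,}\qquad K_2\, A(y_0,r) \leq u(y_0).
\]
(Here I use that, by Proposition~\ref{pr1.5}(7), $u\in\mathcal{L}^1_{\mathrm{loc}}(D)$, because $u(y_0)>0$ excludes the case $u\equiv -\infty$, so the averages are well-defined real numbers.)

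The second main step is to observe that $A(y_0,r)$ is strictly positive. Indeed, the first inequality yields $0 < u(y_0) \leq K_1\, A(y_0,r)$, and since $K_1 \geq 1 > 0$ this forces $A(y_0,r) > 0$. Now I can legitimately divide the chain of inequalities
\[
K_2\, A(y_0,r) \leq u(y_0) \leq K_1\, A(y_0,r)
\]
by the positive quantity $A(y_0,r)$ to conclude that $K_2 \leq K_1$, which is the claim.

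I do not anticipate a real obstacle: the argument is essentially a two-sided squeeze, and the only subtlety is verifying the sign of the average $A(y_0,r)$ so that the final division preserves the inequality. The positivity hypothesis $u(y_0)>0$ is precisely what is needed for that step; without it, $A(y_0,r)$ could be negative and the division would flip the inequality (consistent with Example~\ref{ex2} and Remark~\ref{r1.11}, which show that strange things happen when quasinearly subharmonic functions take negative values).
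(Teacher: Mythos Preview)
Your proof is correct and follows essentially the same approach as the paper: both combine the two quasinearly subharmonic n.s.\ inequalities at $y_0$ into the two-sided estimate $K_2\,A(y_0,r)\leq u(y_0)\leq K_1\,A(y_0,r)$, check that $A(y_0,r)$ is a finite strictly positive number, and divide through. The only cosmetic difference is that the paper deduces finiteness of the integral directly from $u^+\in\mathcal{L}^1_{\mathrm{loc}}(D)$, whereas you invoke Proposition~\ref{pr1.5}(7); either route works.
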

\begin{proof}
Let $y_0 \in D$ and $u(y_0)>0$. Then for sufficiently small $r>0$ we have the double inequality
\begin{equation}\label{eq1.6}
\frac{K_2}{\nu_N r^N} \int_{B^N(y_0, r)} u(y)\,dm_N(y) \leq u(y_0) \leq \frac{K_1}{\nu_N r^N} \int_{B^N(y_0, r)} u(y)\,dm_N(y),
\end{equation}
thus
\begin{equation}\label{eq1.7}
\frac{K_2}{\nu_N r^N} \int_{B^N(y_0, r)} u(y)\,dm_N(y) \leq \frac{K_1}{\nu_N r^N} \int_{B^N(y_0, r)} u(y)\,dm_N(y).
\end{equation}
Inequality \eqref{eq1.6}, $u^{+} \in \mathfrak{L}_{loc}^{1}(D)$ and $u(y_0)>0$ imply that
\[
0< \int_{B^N(y_0, r)} u(y)\,dm_N(y) <+\infty.
\]
Now $K_2 \leq K_1$ follows from \eqref{eq1.7}.
\end{proof}

Using this lemma and Proposition~\ref{lem1.10} we obtain the following

\begin{proposition}
Let $D$ be a domain in $\mathbb R^N$, $N \geq 2$. Let $u: D \to [-\infty, \infty)$ be a function such that there are $x_1, x_2\in D$ satisfying the double inequality
\begin{equation}\label{u}
u(x_1)>0>u(x_2).
\end{equation}
Then the function $u$ is harmonic if and only if the functions $u$ and $-u$ are quasinearly subharmonic n.s.
\end{proposition}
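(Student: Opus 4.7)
The plan is to reduce the statement to the already established Proposition~\ref{lem1.11} by showing that the two constants of $K$-quasinear subharmonicity n.s.\ for $u$ and $-u$ must in fact coincide under hypothesis~\eqref{u}. The direction ``harmonicity implies quasinear subharmonicity n.s.\ of both $u$ and $-u$'' is trivial, since a harmonic function and its negative both satisfy the mean value equality and are thus $1$-quasinearly subharmonic n.s. I will therefore focus only on the converse.

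Assume $u$ is $K_{1}$-quasinearly subharmonic n.s.\ and $-u$ is $K_{2}$-quasinearly subharmonic n.s.\ in $D$. The first step is a direct application of Lemma~\ref{lem1.10} with $y_{0}=x_{1}$, which by the left inequality in~\eqref{u} lies in the positivity set of $u$; this yields $K_{1}\ge K_{2}$. The second step is to apply the same lemma once more, but with the roles of $u$ and $-u$ interchanged: regard $-u$ as the primary $K_{2}$-quasinearly subharmonic n.s.\ function and $u=-(-u)$ as its $K_{1}$-quasinearly subharmonic n.s.\ negation, and take $y_{0}=x_{2}$, at which $(-u)(x_{2})>0$ by the right inequality in~\eqref{u}. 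This delivers the reverse inequality $K_{2}\ge K_{1}$, whence $K_{1}=K_{2}=:K$.

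Once equality of the two constants is in hand, both $u$ and $-u$ are $K$-quasinearly subharmonic n.s.\ for a common $K\ge 1$, and statement~(2) of Proposition~\ref{lem1.11} applies verbatim to give harmonicity of $u$. I do not anticipate any serious technical obstacle: the whole argument amounts to two symmetric invocations of Lemma~\ref{lem1.10} followed by one appeal to Proposition~\ref{lem1.11}. The only point worth flagging is that \emph{both} sign conditions in~\eqref{u} are genuinely needed, one to feed each of the two applications of Lemma~\ref{lem1.10}, so the hypothesis cannot be weakened to the mere existence of a positive (or merely of a negative) value of $u$.
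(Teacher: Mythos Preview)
Your proposal is correct and follows exactly the paper's approach: two applications of Lemma~\ref{lem1.10} (one to $u$ at $x_1$ and one to $-u$ at $x_2$) to force $K_1=K_2$, followed by an appeal to Proposition~\ref{lem1.11}. The paper's proof is slightly terser, simply asserting that \eqref{u} together with Lemma~\ref{lem1.10} yields $K_1=K_2$, but the content is identical.
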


\begin{proof}
It suffices to show that $u$ is harmonic if $u$ and $-u$ are quasinearly subharmonic n.s. Let $u$ be $K_1$-quasinearly subharmonic n.s. and $-u$ be $K_2$-quasinearly subharmonic n.s. Then double inequality \eqref{u} and Lemma~\ref{lem1.10} imply the equality $K_1 = K_2.$ Now the harmonicity of $u$ follows from Proposition~\ref{lem1.11}.
\end{proof}

The following example shows that there is $u: D\to (0, \infty)$ such that $u$ and $(-u)$ are quasinearly subharmonic n.s. but $u$ is not harmonic.

\begin{example}
Let $D=\mathbb R^{n}$ and
\begin{displaymath} u(x):=\begin{cases}2 , & {\textrm{when }}\, x = 0,\\
1, & {\textrm{when }}\, x\ne 0.\end{cases}\end{displaymath}
Then $u$ is $2$-quasinearly subharmonic n.s. and $(-u)$ is $1$-quasinearly subharmonic n.s., but $u$ is discontinuous at zero.
\end{example}

\begin{remark}
The above functions $u$ and $(-u)$ are both $2$-quasinearly subharmonic. Thus Proposition~\ref{lem1.11} becames false if we replace the quasinearly subharmonicity n.s. by quasinearly subharmonicity.
\end{remark}


\begin{thebibliography}{99}



\bibitem{AhBr88} P. Ahern, J. Bruna, \emph{Maximal and area integral characterizations of Hardy-Sobolev spaces in the unit ball of ${\mathbb{C}}^n$},
Revista Mat. Iberoamericana, \textbf{4}, 123--153 (1988).


\bibitem{AhRu93} P. Ahern, W. Rudin,
\emph{Zero sets of functions in harmonic Hardy spaces},
Math. Scand., \textbf{73}, 209--214 (1993).

\bibitem{CoRa97} B.J. Cole, T.J. Ransford, \emph{Subharmonicity without upper semicontinuity}, J. Functional  Anal., \textbf{147}, 420--442 (1997).

\bibitem{DiTr84} E. Di~Benedetto, N.S. Trudinger, \emph{Harnack inequalities for quasi-minima of variational integrals}, Ann. Inst. H.~Poincaré, Analyse Nonlineaire, \textbf{1}, 295--308 (1984).

\bibitem{DjPa07$_1$} O. Djordjevi\'c, M. Pavlovi\'c, \emph{Equivalent norms on Dirichlet spaces of polyharmonic functions on the ball in ${\mathbb{R}}^N$},
Bol. Soc. Mat. Mexicana (3), \textbf{13}, 307--319 (2007).

\bibitem{DjPa07$_2$} O. Djordjevi\'c, M. Pavlovi\'c, \emph{${\mathcal{L}}^p$-integrability of the maximal function of a polyharmonic function}, J. Math. Anal. Appl., \textbf{336}, 411--417 (2007).

\bibitem{Do57} Y. Domar, \emph{On the existence of a largest subharmonic minorant of a given function}, Arkiv  mat., \textbf{3} (39), 429--440 (1957).

\bibitem{Do88} Y. Domar, \emph{Uniform boundedness in families related to subharmonic functions}, J. London Math. Soc. (2), \textbf{38}, 485--491 (1988).

\bibitem{DoRi10$_1$} O. Dovgoshey, J. Riihentaus,  \emph{Bi-Lipschitz mappings and quasinearly subharmonic functions}, Int. J. Math. Math. Sci., 1--8 (2010).

\bibitem{DoRi10$_2$} O. Dovgoshey, J. Riihentaus,  \emph{A remark concerning generalized mean value inequalities for subharmonic functions}, International Conference Analytic Methods of Mechanics and Complex Analysis, Dedicated to N.A.~Kilchevskii and V.A.~Zmorovich on the Occasion of their Birthday Centenary, Kiev, Ukraine, June 29 - July 5, 2009, Transactions of the Institute of Mathematics of the National Academy of Ukraine, \textbf{7} (2), 26--33 (2010).


\bibitem{DoRi13} O. Dovgoshey, J. Riihentaus,  \emph{Mean type inequalities for quasinearly subharmonic functions}, Glasgow Math. J., \textbf{55}, 349--368 (2013).

\bibitem{FeSt72} C. Fefferman, E.M. Stein, \emph{H$^p$ spaces of several variables}, Acta Math., \textbf{129},  137--192 (1972).

\bibitem{Ga07} J.B. Garnett,\emph{ Bounded Analytic Functions},
Springer, New York, 2007 (Revised First Edition).


\bibitem{HK} W.K. Hayman, P.B. Kennedy \emph{Subharmonic functions},
Academic Press, London, \textbf{1}, (1976).

\bibitem{Ha92} D.J. Hallenbeck, \emph{Radial growth of subharmonic functions},
Pitman Research Notes, \textbf{262}, 113--121 (1992).

\bibitem{Her71} M. Herve, \emph{Analytic and Plurisubharmonic Functions in Finite and Infinite Dimensional Spaces},
Lecture Notes in Mathematics 198, Springer-Verlag, Berlin, 1971.

\bibitem{Ko07} V. Koji\'c,  \emph{Quasi-nearly subharmonic functions and conformal mappings}, Filomat., \textbf{21} (2),  243--249 (2007).

\bibitem{KoMa12} P. Koskela, V.  Manojlovi\'c,  \emph{Quasi-nearly subharmonic functions and quasiconformal mappings}, Potential Anal., \textbf{37}, 187--196 (2012).

\bibitem{Ku74} U. Kuran,  \emph{Subharmonic behavior of $\mid h\mid ^p$, ($p>0$, $h$ harmonic)}, J. London Math. Soc. (2), \textbf{8},  529--538 (1974).

\bibitem{LiLo01} E.H.  Lieb, M. Loss, \emph{Analysis},
Graduate Studies in Mathematics, \textbf{14}, American Mathematical Society, Providence, Rhode Island, 2001.

\bibitem{Mih13} Olivera R. Mihi\'c,  \emph{Some properties of quasinearly subharmonic functions and maximal theorem for Bergman type spaces}, ISRN Mathematical Analysis,  3~p. (2013).

\bibitem{Miz96} Y. Mizuta, \emph{Potential Theory in Euclidean Spaces},
 Gaguto International Series, Mathematical Sciences and Applications, \textbf{6}, Gakk$\bar{{\textrm{o}}}$tosho Co., Tokyo, 1996.

\bibitem{Pa94} M. Pavlovi\'c, \emph{On subharmonic behavior and oscillation of functions on balls in ${\mathbb{R}}^n$}, Publ. Inst. Math. (Beograd), \textbf{55} (69), 18--22 (1994).

\bibitem{Pa96$_2$} M. Pavlovi\'c, \emph{Subharmonic behavior of smooth functions}, Math. Vesnik, \textbf{48}, 15--21 (1996).

\bibitem{PaRi08} M. Pavlovi\'c, J.  Riihentaus, \emph{Classes of quasi-nearly subharmonic functions}, Potential Anal., \textbf{29}, 89--104 (2008).

\bibitem{Ra37} T.  Rado, \emph{Subharmonic Functions},
Springer, Berlin, 1937.

\bibitem{Rii89} J.  Riihentaus, \emph{On a theorem of Avanissian--Arsove}, Expo.  Math., \textbf{7},  69--72 (1989).

\bibitem{Rii00} J. Riihentaus, \emph{Subharmonic functions: non-tangential and
tangential boundary behavior}, Function Spaces, Differential Operators and Nonlinear Analysis (FSDONA'99), Proceedings of the Sy\"ote Conference 1999, V.~Mustonen, J.~R\'akosnik (eds.),
 Math. Inst., Czech Acad. Science,  Praha, 2000, \mbox{pp. 229--238.} \mbox{(ISBN 80-85823-42-X).}

\bibitem{Rii01} J. Riihentaus, \emph{A generalized mean value inequality for subharmonic functions}, Expo. Math., \textbf{19},  187--190 (2001).

\bibitem{Rii04$_1$} J. Riihentaus, \emph{Subharmonic functions, mean value inequality, boundary behavior, nonintegrability and exceptional sets},
 International Workshop on Potential Theory and Free Boundary Flows, Kiev, Ukraine, August 19-27, 2003,
Transactions of the Institute of Mathematics of the National
Academy of Sciences of Ukraine, \textbf{1} (3), 169--191 (2004).

\bibitem{Rii04$_2$} J.  Riihentaus, \emph{Weighted boundary behavior and nonintegrability of subharmonic functions}, International Conference on
Education and Information Systems: Technologies and Applications (EISTA'04), Orlando, Florida, USA, July 21-25, 2004, Proceedings,
M.~Chang, Y-T.~Hsia, F.~Malpica, M.~Suarez, A.~Tremante, F.~Welsch (eds.), vol.~II, 2004, \mbox{pp. 196--202.}

\bibitem{Rii05} J. Riihentaus, \emph{An integrability condition and weighted boundary behavior of subharmonic and ${\mathcal{M}}$-subharmonic functions:
a survey}, Int. J. Diff. Eq. Appl., \textbf{10}, 1--14 (2005).

\bibitem{Rii06$_2$} J.  Riihentaus, \emph{A weighted boundary limit result for subharmonic functions}, Adv. Algebra and Analysis, \textbf{1},
27--38 (2006).

\bibitem{Rii07$_1$} J. Riihentaus, \emph{Separately quasi-nearly subharmonic functions}, Complex Analysis and Potential Theory, Proceedings of the
Conference Satellite to ICM~2006, Tahir Aliyev Azero$\breve{\textrm{g}}$lu, Promarz M. Tamrazov (eds.), Gebze Institute of Technology, Gebze, Turkey,
September 8-14,  2006, World Scientific, Singapore, 156--165 (2007).

\bibitem{Rii07$_2$} J.  Riihentaus, \emph{On the subharmonicity of separately  subharmonic functions}, Proceedings of the 11th WSEAS International
Conference on Applied Mathematics (MATH'07),  Dallas, Texas, USA, March 22-24, 2007, Kleanthis Psarris, Andrew D.~Jones (eds.), WSEAS,
230--236 (2007).

\bibitem{Rii07$_4$} J. Riihentaus, \emph{Subharmonic functions, generalizations and separately subharmonic functions},The XIV-th Conference on Analytic Functions, July 22-28, 2007, Che\l m, Poland, Scientific Bulletin of Che\l m, Section of Mathematics
and Computer Science, \textbf{2}, 49--76 (2007).

\bibitem{Rii08$_2$} J.  Riihentaus, \emph{Separately subharmonic functions and quasi-nearly subharmonic functions}, The 12th Worlds Multi-Conference on Systemics, Cybernetics and Informatics (WMSCI 2008), Orlando, Florida, USA, June 29th-July 2nd, 2008, Proceedings,
N.~Callaos, W.~Lesso, C.D.~Zinn, J.~Baralt, J.~Boukachour, C.~~White, T.~Marwala, F.~Nelwamondo (eds.), \textbf{V}, 53--56 (2008).

\bibitem{Rii09$_1$} J. Riihentaus,  \emph{On an inequality related to the radial growth of subharmonic functions}, CUBO, A Mathematical Journal, \textbf{11} (4), 127--136 (2009).


\bibitem{Rii09$_2$}  J.~Riihentaus, \emph{Subharmonic functions, generalizations,  weighted boundary behavior, and separately subharmonic functions: A survey},
 Fifth World Congress of Nonlinear Analysts (WCNA 2008), Orlando, Florida, USA,  July 2-9, 2008, Nonlinear Analysis, Series A: Theory, Methods \& Applications, \textbf{71} (12), e2613--e26267 (2009).

\bibitem{St98} M. Stoll, \emph{Weighted tangential boundary limits of subharmonic functions on
domains in ${\mathbb {R}}^n$ ($n\geq 2$)}, Math.  Scand., \textbf{83}, 300--308 (1998).

\bibitem{St02} M. Stoll, \emph{Harmonic majorants for eigenfunctions of the Laplacian with finite Dirichlet integrals}, J. Math. Anal. Appl., \textbf{274} (2), 788--811 (2002).

\bibitem{St04} M. Stoll, \emph{On generalizations of the Littlewood-Paley inequalities to domains in
$\mathbb{R}^n$, $(n\geq 2)$},  International Workshop on Potential Theory in Matsue, Shimane University, Matsue, Japan, August 23-28, 2004. A revised version of the paper \emph{The Littlewood-Paley inequalities for Hardy-Orlicz spaces of harmonic functions on domains in ${\mathbb{R}}^n$} has been published in Advanced Studies in Pure Mathematics, \textbf{4}, 363--376 (2006).

\bibitem{St13} M. Stoll, \emph{On the  Littlewood-Paley inequalities for subharmonic functions on domains in
$\mathbb{R}^n$},
Recent Advances in Harmonic Analysis
and Applications
(eds. D. Bilyk et al.), Springer Proceedings in Mathematics \& Statistics, \textbf{25} (2), 357--383 (2013).

\bibitem{St14} M. Stoll, \emph{Littlewood-Paley theory  for subharmonic  functions on the unit ball in
$\mathbb{R}^N$}, J. Math. Anal. Appl., \textbf{420} (1),  483--514 (2014).

\bibitem{Su90} N. Suzuki, \emph{Nonintegrability of harmonic functions in a domain}, Japan. J. Math., \textbf{16}, 269--278 (1990).

\bibitem{Su91} N. Suzuki, \emph{Nonintegrability of superharmonic functions}, Proc.\ Amer.\ Math.\ Soc., \textbf{113} (1), 113--115 (1991).

\bibitem{Sz33} E. Szpilrajn, \emph{Remarques sur les fonctions sousharmoniques},  Ann. Math., \textbf{34}, 588--594 (1933).

\bibitem{To86} A. Torchinsky, \emph{Real-Variable Methods in Harmonic Analysis}, Academic Press, London, 1986.

\bibitem{Vu82} M. Vuorinen, \emph{On the Harnack constant and the boundary behavior of Harnack functions}, Ann. Acad. Sci. Fenn., Ser. A I,
Math., \textbf{7}, 259--277 (1982).



\end{thebibliography}
\end{document}